\numberwithin{equation}{section}
\newtheorem{theorem}{Theorem}[section]
\newtheorem{lemma}[theorem]{Lemma}
\theoremstyle{definition}
\newtheorem{definition}[theorem]{Definition}
\newtheorem{remark}{Remark}
\def\dfrac{\displaystyle\frac}
\def\dint{\displaystyle\int}
\newcommand{\Ui}{\partial U_t^{int}}
\newcommand{\Ue}{\partial U_t^{ext}}
\subjclass[2010]{35J05, 35B06, 35B45}
\keywords{A priori estimates, comparison principle, mixed boundary conditions, Robin boundary conditions}
\begin{document}
\title[Sharp estimates for solutions to elliptic problems with mixed BCs]{Sharp estimates for solutions to elliptic problems with mixed boundary conditions}
\author[ A. Alvino, F. Chiacchio, C. Nitsch, C. Trombetti]{A. Alvino, F. Chiacchio, C. Nitsch, C. Trombetti}
\date{}
\address{\vskip1cm\noindent \hfill\break\vskip-.2cm \noindent \hfill\break
\vskip-.2cm \noindent Dipartimento di Matematica e Applicazioni ``R.
Caccioppoli'', Universit\`{a} degli Studi di Napoli ``Federico II'',
Complesso Universitario Monte S. Angelo, via Cintia - 80126 Napoli, Italy.
\hfill\break\vskip-.2cm \noindent e-mail: \texttt{angelo.alvino@unina.it, fchiacch@unina.it, c.nitsch@unina.it, cristina@unina.it}}
\maketitle

\begin{abstract} We show, using symmetrization techniques, that it is possible to prove a comparison principle (we are mainly focused on $L^1$ comparison) between solutions to an elliptic partial differential equation on a smooth bounded set $\Omega$ with a rather general boundary condition, and solutions to a suitable related problem defined on a ball having the same volume as $\Omega$. This includes for instance mixed problems where Dirichlet boundary conditions are prescribed on part of the boundary, while Robin boundary conditions are prescribed on its complement.
\end{abstract}

\section{Introduction}
In a recent paper \cite{ANT}, a comparison principle for solution to elliptic partial differential equation with Robin boundary conditions was exploited for the first time using symmetrization techniques. This was for long time believed to be impossible in view of the lack of a Polya Sz\"ego principle for Sobolev functions defined on a bounded domain and not assuming constant value on its boundary. Nontheless the comparison is quite sensitive to the dimension and contrary to the classical Talenti's principle \cite{Ta} depends upon the source term. This seems to be a distinctive feature of Robin problems and makes the estimates rather difficult to obtain. 

Here we push our analysis even further and we consider a generalized Robin boundary condition. Aiming at filling the gap between Robin and Dirichlet, we consider the second one as a special case of the first one when the boundary parameter blows-up.

The outcome is a comparison result for special classes of problems where Robin and Dirichlet conditions can be mixed up.  
Let $\Omega $ be an open, bounded set of $\mathbb{R}^{N}$ with Lipschitz
boundary. Let $\beta (x)$ be a measurable function defined on $
\partial \Omega $ such that $0<m<\beta(x) \le M$ and $f\in L^2(\Omega)$ a non negative function. We consider the following problem 
\begin{equation}
\left\{ 
\begin{array}{ll}
-\Delta u=f & \mbox{in $\Omega$} \\ 
&  \\ 
\displaystyle\frac{\partial u}{\partial \nu }+\beta (x)\,u=0 & 
\mbox{on
$\partial\Omega$,}
\end{array}
\right.   \label{problem}
\end{equation}
where $\nu $, denotes the outer unit normal to $\partial \Omega $.

A function $u \in H^1(\Omega)$ is a weak solution to \eqref{problem} if

\begin{equation}
\int_{\Omega }\nabla u\nabla \phi \,dx+\int_{\partial \Omega }\beta
(x)\,u\phi \,d\mathcal{H}^{N-1}(x)=\int_{\Omega }f\phi \,dx\quad \forall \phi
\in H^{1}(\Omega ).  \label{weaksol}
\end{equation}
We will establish a comparison principle with the solution to the following
problem 
\begin{equation}
\left\{ 
\begin{array}{ll}
-\Delta v=f^\sharp & \mbox{in $\Omega^\sharp$} \\ 
&  \\ 
\displaystyle\frac{\partial v}{\partial \nu }+\overline{\beta }\,v=0 & 
\mbox{on $\partial\Omega^\sharp$.}
\end{array}
\right.   \label{problem_sharp}
\end{equation}
where $\Omega ^{\sharp }$ denotes the ball, centered at the origin, with the
same Lebesgue measure as $\Omega $, $f^\sharp$ is the Schwarz rearrangement of $f$, and $\overline{\beta }\,>0$ is a constant defined by
the following relation
\begin{equation}\label{main_cond}
\frac{Per( \Omega^\sharp)}{\overline \beta} =\left(\int_{\partial \Omega ^{\sharp }}\frac{1}{\overline{\beta }}d\mathcal{H}
^{N-1}(x)\right){=}\int_{\partial \Omega }\frac{1}{\beta (x)}d\mathcal{H}^{N-1}(x).
\end{equation}
Our main theorem is

\begin{theorem}\label{th_main_1} 
Let $u$ and $v$ be the solutions to Problem \eqref{problem}
and to Problem \eqref{problem_sharp}, respectively. Then,
when $N = 2$, we have
$$\|u\|_{L^{1}(\Omega)}\le \|v\|_{L^{1}(\Omega^\sharp)}.$$
While for $N\ge 3$
$$\|u\|_{L^{1}(\Omega)}\le \|v\|_{L^{1}(\Omega^\sharp)},$$
provided
\begin{equation}\label{f_cond}
\int_E f(x)dx \le \frac{|E|^{1-\frac{2}N}}{|\Omega|^{1-\frac{2}N}}\int_\Omega f(x)dx
\end{equation}
for all measurable $E\subseteq\Omega$.
Moreover for $N=2$ and $f\equiv 1$ we have
\begin{equation*}
u^{\sharp }(x)\leq v(x)\quad x\in \>\Omega ^{\sharp }.
\end{equation*}
\end{theorem}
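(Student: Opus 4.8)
The plan is to prove the pointwise bound $u^\sharp\le v$ by showing that the distribution function of $u$ is dominated by that of $v$, i.e. $|\{u>t\}|\le|\{v>t\}|$ for every $t>0$; since $u^\sharp$ and $v$ are Schwarz symmetric and $v$, solving \eqref{problem_sharp} with radial datum, is radially decreasing, this is equivalent to the assertion. First I would make $v$ explicit: integrating the radial ODE $-(rv')'=r$ on $(0,R)$ with $v'(R)+\bar\beta v(R)=0$ and $R=\sqrt{|\Omega|/\pi}$ gives $v(r)=\frac{R^{2}-r^{2}}{4}+\frac{R}{2\bar\beta}$, and \eqref{main_cond} (with $\per(\Omega^\sharp)=2\pi R$) yields $\frac{R}{2\bar\beta}=\frac{1}{4\pi}\int_{\partial\Omega}\beta^{-1}\,d\mathcal{H}^{1}$. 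Hence $|\{v>t\}|=|\Omega|$ for $t\le v(R)$ and $|\{v>t\}|=|\Omega|+\int_{\partial\Omega}\beta^{-1}\,d\mathcal{H}^{1}-4\pi t$ for $t>v(R)$, so the goal becomes: $|\{u>t\}|+4\pi t\le|\Omega|+\int_{\partial\Omega}\beta^{-1}\,d\mathcal{H}^{1}$ for $t>v(R)$ (the range $t\le v(R)$ being trivial).

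Since $f\equiv1\ge0$ and $0<m<\beta\le M$, the strong maximum principle gives $u>0$ on $\overline\Omega$, with the infimum attained on $\partial\Omega$. For a.e. $t>0$ put $U_{t}=\{u>t\}$, $\mu(t)=|U_{t}|$, and split $\partial U_{t}$ into its interior part $\Gamma_{t}=\partial U_{t}\cap\Omega\subseteq\{u=t\}$ and its boundary part $\gamma_{t}=\{x\in\partial\Omega:u(x)>t\}$. Testing \eqref{weaksol} with $\phi_{h}=\min\{h^{-1}(u-t)_{+},1\}$ and letting $h\to0$, together with the coarea formula, produces the identity $\int_{\Gamma_{t}}|\nabla u|\,d\mathcal{H}^{1}+\int_{\gamma_{t}}\beta u\,d\mathcal{H}^{1}=\mu(t)$, while $-\mu'(t)=\int_{\Gamma_{t}}|\nabla u|^{-1}\,d\mathcal{H}^{1}$ for a.e. $t$. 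The choice $\phi\equiv1$ gives the flux identity $\int_{\partial\Omega}\beta u\,d\mathcal{H}^{1}=|\Omega|$, and combining it with Cauchy--Schwarz, $\per(\Omega)^{2}\le\int_{\partial\Omega}\beta\,d\mathcal{H}^{1}\int_{\partial\Omega}\beta^{-1}\,d\mathcal{H}^{1}$, and the isoperimetric inequality $\per(\Omega)\ge\per(\Omega^\sharp)$ gives $\inf_{\partial\Omega}u\le|\Omega|\big/\!\int_{\partial\Omega}\beta\,d\mathcal{H}^{1}\le\frac{1}{4\pi}\int_{\partial\Omega}\beta^{-1}\,d\mathcal{H}^{1}=v(R)$; this will serve as the ``initial datum'' for the comparison.

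The core is a differential inequality for $\mu$ in the range $t>v(R)$. From Cauchy--Schwarz on $\Gamma_{t}$, $\mathcal{H}^{1}(\Gamma_{t})^{2}\le(-\mu'(t))\int_{\Gamma_{t}}|\nabla u|\,d\mathcal{H}^{1}$, and the identity above, one gets $-\mu'(t)\ge\mathcal{H}^{1}(\Gamma_{t})^{2}\big/\bigl(\mu(t)-\int_{\gamma_{t}}\beta u\,d\mathcal{H}^{1}\bigr)$. Two ingredients feed in: the isoperimetric inequality in $\mathbb{R}^{2}$ applied to $U_{t}$ (whose reduced boundary is $\Gamma_{t}\cup\gamma_{t}$), $\mathcal{H}^{1}(\Gamma_{t})+\mathcal{H}^{1}(\gamma_{t})\ge2\sqrt{\pi\mu(t)}$; and a lower bound for the boundary integral coming from $u\ge t$ on $\gamma_{t}$ and $\mathcal{H}^{1}(\gamma_{t})^{2}\le\int_{\gamma_{t}}\beta\,d\mathcal{H}^{1}\int_{\gamma_{t}}\beta^{-1}\,d\mathcal{H}^{1}\le\int_{\gamma_{t}}\beta\,d\mathcal{H}^{1}\int_{\partial\Omega}\beta^{-1}\,d\mathcal{H}^{1}$, namely $\int_{\gamma_{t}}\beta u\,d\mathcal{H}^{1}\ge t\,\bar\beta\,\mathcal{H}^{1}(\gamma_{t})^{2}\big/\per(\Omega^\sharp)$, where \eqref{main_cond} has been used to rewrite $\int_{\partial\Omega}\beta^{-1}=\per(\Omega^\sharp)/\bar\beta$. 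A first consequence (again using $t>v(R)$ and $\int_{\gamma_{t}}\beta u\le\mu(t)$) is $\mathcal{H}^{1}(\gamma_{t})<2\sqrt{\pi\mu(t)}$: the boundary portion of $\partial U_{t}$ never exhausts the isoperimetric budget. Plugging $\mathcal{H}^{1}(\Gamma_{t})\ge2\sqrt{\pi\mu(t)}-\mathcal{H}^{1}(\gamma_{t})$ and the boundary bound into the lower bound for $-\mu'(t)$, and distinguishing whether $\gamma_{t}=\varnothing$ (where the estimate collapses to the classical Talenti inequality on $\Omega$, $-\mu'(t)\ge4\pi$) or not, I would aim at the clean conclusion $-\mu'(t)\ge4\pi$ for a.e. $t>v(R)$.

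Granting this, integration over $(v(R),t)$ together with $\mu(v(R))\le|\Omega|$ gives $|\{u>t\}|\le|\Omega|-4\pi\bigl(t-v(R)\bigr)=|\Omega|+\int_{\partial\Omega}\beta^{-1}\,d\mathcal{H}^{1}-4\pi t=|\{v>t\}|$ for $t>v(R)$, and trivially $|\{u>t\}|\le|\Omega|=|\{v>t\}|$ for $t\le v(R)$; by the first paragraph this is $u^\sharp\le v$. The step I expect to be the real obstacle is the differential inequality: the perimeter lost to $\gamma_{t}$ in the isoperimetric inequality is linear in $\mathcal{H}^{1}(\gamma_{t})$ while the naive gain from $\int_{\gamma_{t}}\beta u\ge t\int_{\gamma_{t}}\beta$ is only quadratic in $\mathcal{H}^{1}(\gamma_{t})$, so this crude bound is too weak exactly when $U_{t}$ meets $\partial\Omega$ in a set of small measure (which can only happen, for $t>v(R)$, when part of $\partial\Omega$ rises above the comparison boundary value $v(R)$). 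Closing this gap should require exploiting more structure --- a sharper lower bound for $\int_{\gamma_{t}}\beta u$ encoding the boundary flux carried down from lower levels, or a joint differential inequality for $\mu$ and the boundary distribution function $t\mapsto\int_{\gamma_{t}}\beta^{-1}\,d\mathcal{H}^{1}$ --- and it is precisely here that the sharp two-dimensional isoperimetric inequality (equality: the disk) is indispensable, which accounts for the restriction to $N=2$, while the calibration \eqref{main_cond} of $\bar\beta$ is what makes the integrated inequality saturate on $\Omega^\sharp$.
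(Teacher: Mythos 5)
Your proposal addresses only the third assertion ($u^\sharp\le v$ for $N=2$, $f\equiv 1$), and even there it stops short: you yourself flag that the key step, the pointwise differential inequality $-\mu'(t)\ge 4\pi$ for a.e.\ $t>v_m$, is not established by the crude bounds you set up, and in fact that pointwise inequality is false in general. If part of $\partial\Omega$ rises well above $v_m$, then for $t$ slightly larger than $v_m$ the level set $U_t$ can meet $\partial\Omega$ in a set $\gamma_t$ of substantial length, the interior part $\Gamma_t$ carries only a fraction of the isoperimetric budget, and $-\mu'(t)$ can drop below $4\pi$. Your observation that the linear loss of perimeter to $\gamma_t$ is not compensated by a quadratic gain in $\int_{\gamma_t}\beta u$ correctly identifies why the argument collapses; it is not merely a technical obstacle but a sign that the pointwise route cannot work.

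The paper avoids this by never asking for a pointwise bound on $-\mu'$. Applying Cauchy--Schwarz on the whole $\partial U_t$ with the weight $g=|\nabla u|$ on $\Ui$ and $g=\beta u$ on $\Ue$ gives, for $N=2$, $f\equiv 1$,
\begin{equation*}
4\pi \le -\mu'(t) + \int_{\Ue}\frac{1}{\beta(x)\,u(x)}\,d\mathcal{H}^1(x),
\end{equation*}
which has a boundary ``error'' term that can indeed dominate at some levels. The crucial step is then to \emph{integrate this in $t$} over $[0,\tau]$ and to control the cumulative boundary contribution via Fubini (Lemma \ref{lem_boundary}): $\int_0^\tau\int_{\Ue}(\beta u)^{-1}\,d\mathcal{H}^1\,dt \le \int_{\partial\Omega}\beta^{-1}\,d\mathcal{H}^1$, with \emph{equality} for the radial comparison function once $\tau\ge v_m$. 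Combined with the calibration \eqref{main_cond}, this yields $4\pi\tau+\mu(\tau)-|\Omega|\le 4\pi\tau+\phi(\tau)-|\Omega|$ for $\tau\ge v_m$, hence $\mu\le\phi$. Your closing conjecture (``a joint differential inequality for $\mu$ and the boundary distribution function'', ``encoding the boundary flux carried down from lower levels'') is precisely what the Fubini computation accomplishes; you identified the right idea but did not carry it out. Note also that your proposal does not address the $L^1$ comparison for general nonnegative $f$ in $N=2$ nor the $N\ge 3$ case under \eqref{f_cond}: those require the additional monotone function $F(s)=\int_0^s\sigma^{\frac2N-1}\int_0^\sigma f^*\,d\sigma$ and an ODE-type contradiction argument on $U(\tau)=\int_0^\tau\mu$ and $V(\tau)=\int_0^\tau\phi$, which cannot be extracted from a pointwise statement like $u^\sharp\le v$ (indeed that pointwise bound is only claimed for $f\equiv1$).
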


\begin{remark}
We observe that without \eqref{f_cond} Theorem \ref{th_main_1} is false (see Remark \ref{rem_count}). Condition \eqref{f_cond} is fulfilled for instance by $f$ constant. 
If we consider the decreasing rearrangement of $f$, namely $f^*$ (see the next section for the definition), 
it reads as follows
$$s^{1-\frac{2}{N}}\int_0^s f^*(r)dr \le |\Omega|^{1-\frac{2}{N}}\int_0^{|\Omega|} f^*(r)dr.$$
Whether or not \eqref{f_cond} is optimal is still an open problem.
\end{remark}

Robin boundary conditions with a variable boundary parameter appeared for instance in \cite{BBN} in the context of optimal insulation and in \cite{DGK} where the  optimization of the $p$-Laplacian eigenvalue with respect to the boundary parameters is shown to be a well posed problem.

The upper bound $M$ on the function $\beta$ is a technical assumption which simplifies the computation, but can be easily relaxed. In fact, once the estimates in Theorem \ref{th_main_1} have been established, by continuity argument they are comfortably extended to cases where $\beta=+\infty$ on part of the boundary of $\Omega$ (and in this case we formally consider $1/\beta=0$ on that portion). This account for comparison principles for solutions to elliptic problems with mixed boundary conditions, a result that to our knowledge is completely new.

To give an example, we can consider an open bounded set $\Omega$ whose boundary is, up to a negligeble set, the union of two smooth manifolds $\Gamma_0$ and $\Gamma_1$ and consider the case $\beta=\hat\beta = const.>0$ on $\Gamma_0$ and $\beta=+\infty$ on $\Gamma_1$. In such a case we have

\begin{equation}
\left\{ 
\begin{array}{ll}
-\Delta u=f & \mbox{in $\Omega$,} \\ 
&  \\ 
\displaystyle\frac{\partial u}{\partial \nu }+\hat\beta\,u=0 & 
\mbox{on
$\Gamma_0$,}\\ &\\ 
\displaystyle u=0 & 
\mbox{on
$\Gamma_1$.}
\end{array}
\right.
\end{equation}

The resulting solution can be compared, in view of Theorem \ref{th_main_1}, 
with the solution to the following problem
\begin{equation}
\left\{ 
\begin{array}{ll}
-\Delta u=f^\sharp & \mbox{in $\Omega^\sharp$,} \\ 
&  \\ 
\displaystyle\frac{\partial u}{\partial \nu }+\overline\beta\,u=0 & 
\mbox{on
$\partial \Omega^\sharp$,}
\end{array}
\right.
\end{equation}
where the constant $\displaystyle\overline\beta=\frac{Per(\Omega^\sharp)}{\mathcal{H}
^{N-1}(\Gamma_0)}\hat\beta$.

%

\section{Notation and Preliminaries}\label{sec_not}

The solution $u \in H^1(\Omega)$ to \eqref{problem} is the unique minimizer
of

\begin{equation}
\min_{w\in H^{1}(\Omega )}{\frac{1}{2}\int_{\Omega }|\nabla w|^{2}\,dx+\frac{
1}{2}\int_{\partial \Omega }\beta (x)w^{2}\,}d\mathcal{H}^{N-1}(x)-{
\int_{\Omega }w\,dx}.  \label{minimizer}
\end{equation}


For $\displaystyle t\ge 0$ we denote by

\begin{equation*}
U_t=\{x \in \Omega: u(x)>t\}, \quad \partial U_t^{int} = \partial U_t \cap
\Omega, \quad \partial U_t^{ext}= \partial U_t \cap \partial\Omega,
\end{equation*}

and by 
\begin{equation*}
\mu (t)=|U_{t}|,\quad P_{u}(t)=\mathrm{Per}(U_{t}).
\end{equation*}
the Lebesgue measure of $U_{t}$ and its perimeter in $\mathbb{R}^{N}$,
respectively. 
Moreover, $\Omega ^{\sharp }$ denotes the ball, centered at the origin, with
the same measure as $\Omega $ and $v$ denotes the unique, radial and
decreasing along the radius, solution to Problem\eqref{problem_sharp}.


Then, using the same notation as above, for $t\ge 0$ we set

\begin{equation*}
V_t=\{x \in \Omega^\sharp: v(x)>t\}, \quad \phi(t) = |V_t|, \quad \quad
P_v(t) = \mathrm{Per}(V_t).
\end{equation*}
Since $v$ is radial, positive and decreasing along the radius then, for $
0\le t\le\min_{\Omega^\sharp} v$, $V_t$ coincides with $\Omega^\sharp$,
while, for $\min_{\Omega^\sharp} v <t<\max_{\Omega^\sharp} v$, $V_t$ is a
ball concentric to $\Omega^\sharp$ and strictly contained in it.

In what follows we denote by $\omega_N$ the measure of the unit ball in $
\mathbb{R}^N$.

\begin{definition}
\label{rarrangement} Let $h:x\in \Omega \rightarrow \lbrack 0,+\infty
\lbrack $ be a measurable function, then the decreasing rearrangement $
h^{\ast }$ of $h$ is defined as follows: 
\begin{equation*}
h^{\ast }(s)=\inf \{t\geq 0:|\{x\in \Omega :|h(x)|>t\}|<s\}\quad s\in
\lbrack 0,\Omega ].
\end{equation*}%
while the Schwarz rearrangement of $h$ is defined as follows 
\begin{equation*}
h^{\sharp }(x)=h^{\ast }(\omega _{N}|x|^{N})\quad x\in \Omega ^{\sharp }.
\end{equation*}
\end{definition}

It is easily checked that $h$, $h^*$ and $h^\sharp$ a are equi-distributed,
i.e. 
\begin{equation*}
|\{x\in\Omega: |h(x)|>t\}| = |\{s\in (0,|\Omega|: h^*(s)>t\}| =
|\{x\in\Omega^\sharp: h^\sharp(x)>t\}|\quad t\ge 0
\end{equation*}
and then 
if $h\in L^p(\Omega)$, $1 \le p \le \infty$, then $h^* \in L^p(0,|\Omega|)$, 
$h^\sharp \in L^p(\Omega^\sharp)$, and 
\begin{equation*}
||h||_{L^p(\Omega)}=||h^*||_{L^p(0,|\Omega|)}=||h^\sharp||_{L^p(\Omega^
\sharp)} .
\end{equation*}

\section{Proof of Theorem \protect\ref{th_main_1}}\label{sec_main}

The main ingredient for a comparison result is the following lemma.

\begin{lemma}
Let $u$ and $v$ be the solution to \eqref{problem} and \eqref{problem_sharp}
, respectively. For a.e. $t>0$ we have 
\begin{equation}
\gamma _{N}\phi (t)^{2-\frac{2}{N}}=\int_0^{\phi(t)}f^*(s)\,ds\left(-\phi ^{\prime }(t)+\int_{\partial
V_{t}\cap \partial \Omega ^{\sharp }}\frac{1}{\overline{\beta }}\frac{1}{v(x)
}\>d\mathcal{H}^{N-1}(x)\right),  \label{eq_fundamental}
\end{equation}
while for almost all $t>0$ it holds 
\begin{equation}
\gamma _{N}\mu (t)^{2-\frac{2}{N}}\leq \int_0^{\phi(t)}f^*(s)\,ds\left(-\mu ^{\prime }(t)+\int_{\partial
U_{t}^{ext}}\frac{1}{\beta (x)}\frac{1}{u(x)}\>d\mathcal{H}^{N-1}(x)\right)
\label{ineq_fundamental}
\end{equation}
where $\gamma _{N}=N^{2}\omega _{N}^{-\frac{2}{N}}.$
\end{lemma}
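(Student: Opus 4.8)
The plan is to derive both relations from a careful application of the coarea formula combined with a judicious choice of test function in the weak formulations \eqref{weaksol}. I would first handle the equality \eqref{eq_fundamental} for $v$, since $v$ is radial and everything is explicit there: this will fix the value of the constant $\gamma_N$ and serve as a template for the inequality. Plugging into \eqref{weaksol} (written for Problem \eqref{problem_sharp}) the test function $\phi=\min\{(u-t)^+,h\}$ and letting $h\to 0^+$ after dividing by $h$ — the standard truncation trick — I obtain, for a.e. $t$,
\[
-\frac{d}{dt}\int_{V_t}|\nabla v|^2\,dx + \int_{\partial V_t\cap\partial\Omega^\sharp}\overline\beta\, v\,d\mathcal{H}^{N-1}
= \int_{V_t} f^\sharp\,dx = \int_0^{\phi(t)} f^*(s)\,ds.
\]
On $\partial V_t\cap\Omega^\sharp$ (the ``internal'' level set, a sphere) the Cauchy–Schwarz inequality together with the isoperimetric inequality gives $P_v(t)^2\le \left(-\phi'(t)\right)\left(-\frac{d}{dt}\int_{V_t}|\nabla v|^2\right)$, with equality because $v$ is radial; and since $v$ is radial one also has, using $|\nabla v|=-v'$ constant on each sphere, the pointwise identity that converts $\int_{\partial V_t\cap\partial\Omega^\sharp}\overline\beta\,v$ into $\left(\int_0^{\phi(t)}f^*\right)\cdot\int_{\partial V_t\cap\partial\Omega^\sharp}\frac{1}{\overline\beta}\frac{1}{v}$. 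Combining these and invoking $P_v(t)=N\omega_N^{1/N}\phi(t)^{1-1/N}$ yields \eqref{eq_fundamental} with $\gamma_N=N^2\omega_N^{-2/N}$.

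For the inequality \eqref{ineq_fundamental} I would run the same argument on $\Omega$: choose $\phi=\min\{(u-t)^+,h\}/h$ in \eqref{weaksol}, let $h\to0^+$, and get for a.e. $t$
\[
-\frac{d}{dt}\int_{U_t}|\nabla u|^2\,dx + \int_{\partial U_t^{ext}}\beta(x)\,u\,d\mathcal{H}^{N-1}
= \int_{U_t} f\,dx \le \int_0^{\mu(t)} f^*(s)\,ds.
\]
Here is where the two pieces of the boundary must be treated differently. By the coarea formula $-\mu'(t)=\int_{\partial U_t^{int}}\frac{1}{|\nabla u|}\,d\mathcal{H}^{N-1}$, and Cauchy–Schwarz on $\partial U_t^{int}$ gives $\mathcal{H}^{N-1}(\partial U_t^{int})^2\le(-\mu'(t))\cdot\left(-\tfrac{d}{dt}\int_{U_t}|\nabla u|^2\right)$; for the Robin part one writes $\mathcal{H}^{N-1}(\partial U_t^{ext})^2\le \left(\int_{\partial U_t^{ext}}\beta u\,d\mathcal{H}^{N-1}\right)\left(\int_{\partial U_t^{ext}}\frac{1}{\beta u}\,d\mathcal{H}^{N-1}\right)$. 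Adding the two boundary contributions and using $(a+b)^2\ge a^2+b^2$ in the right order (this is the standard trick from \cite{ANT}), together with the relative isoperimetric-type inequality $P_u(t)=\mathcal{H}^{N-1}(\partial U_t^{int})+\mathcal{H}^{N-1}(\partial U_t^{ext})\ge N\omega_N^{1/N}\mu(t)^{1-1/N}$, produces
\[
\gamma_N\mu(t)^{2-\frac2N}\le \left(\int_0^{\mu(t)}f^*(s)\,ds\right)\left(-\mu'(t)+\int_{\partial U_t^{ext}}\frac{1}{\beta(x)}\frac{1}{u(x)}\,d\mathcal{H}^{N-1}(x)\right).
\]
Finally I would replace $\int_0^{\mu(t)}f^*$ by $\int_0^{\phi(t)}f^*$ in \eqref{ineq_fundamental}: this is legitimate exactly because $\mu(t)\le\phi(t)$ (a monotonicity fact that will be available from the comparison scheme, or one argues it independently — note the desired inequality only gets weaker when the integral of the nonnegative $f^*$ increases), so the stated form follows.

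The main obstacle, as usual in Robin-type symmetrization, is the bookkeeping at the level-set boundary: one must split $\partial U_t$ into its interior and exterior portions, apply the Pólya–Szegő / Cauchy–Schwarz estimate only on the interior portion (where $|\nabla u|$ makes sense and the coarea formula applies) while handling the exterior portion through the boundary term $\int_{\partial\Omega}\beta u$, and then recombine using the superadditivity of the square so that the full perimeter $P_u(t)$ reappears and the isoperimetric inequality can be invoked. Keeping track of which inequality degrades into equality in the radial model — so that the constant $\gamma_N$ matches in \eqref{eq_fundamental} and \eqref{ineq_fundamental} — is the delicate point; everything else (the truncation limit, the coarea formula, the differentiation under the integral sign for a.e. $t$) is routine and can be justified by the standard regularity of $u$ and Sard's theorem applied to $\mu$ and $\phi$.
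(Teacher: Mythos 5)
Your overall plan — choose the truncation $\min\{(u-t)^+,h\}$ as test function, divide by $h$, let $h\to0$, invoke coarea, apply Cauchy--Schwarz and the isoperimetric inequality, and get equalities throughout for the radial $v$ — is exactly the paper's plan, so the skeleton is right. The one substantive difference is how Cauchy--Schwarz is organized, and there your argument as written breaks. The paper introduces a single hybrid function $g$ on all of $\partial U_t$ ($g=|\nabla u|$ on $\partial U_t^{int}$, $g=\beta u$ on $\partial U_t^{ext}$) and applies Cauchy--Schwarz \emph{once} to the whole boundary, obtaining directly $P_u(t)^2\le\left(\int_{\partial U_t}g\right)\left(\int_{\partial U_t}g^{-1}\right)$. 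You instead apply Cauchy--Schwarz separately on $\partial U_t^{int}$ and $\partial U_t^{ext}$, getting $a^2\le AC$ and $b^2\le BD$ with $a=\mathcal H^{N-1}(\partial U_t^{int})$, $b=\mathcal H^{N-1}(\partial U_t^{ext})$, and then try to conclude $(a+b)^2\le(A+B)(C+D)$. The inequality you cite for the glue, $(a+b)^2\ge a^2+b^2$, goes the wrong way: it is a lower bound on $(a+b)^2$, whereas you need an upper bound. What actually closes the gap is the two-term Cauchy--Schwarz $(\sqrt{AC}+\sqrt{BD})^2\le(A+B)(C+D)$, i.e.\ $(a+b)^2\le(\sqrt{AC}+\sqrt{BD})^2\le(A+B)(C+D)$ — or, more simply, the paper's single application to the full boundary, which avoids the recombination entirely.

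Two smaller points. First, on $\int_0^{\phi(t)}$ vs.\ $\int_0^{\mu(t)}$: the argument naturally produces $\int_0^{\mu(t)}f^*$ via Hardy--Littlewood ($\int_{U_t}f\le\int_0^{\mu(t)}f^*$), and that is the form the paper actually uses in the proof of the main theorem; the $\phi(t)$ appearing in the statement of the lemma is a slip. Your proposed fix — replacing $\mu$ by $\phi$ because ``$\mu(t)\le\phi(t)$ will be available from the comparison scheme'' — is circular: $\mu\le\phi$ is exactly what the lemma is being used to establish, so you cannot assume it here. Just prove and state the inequality with $\int_0^{\mu(t)}f^*$. Second, you copy the paper's $\gamma_N=N^2\omega_N^{-2/N}$, but squaring the isoperimetric inequality $P_u(t)\ge N\omega_N^{1/N}\mu(t)^{1-1/N}$ gives $P_u(t)^2\ge N^2\omega_N^{2/N}\mu(t)^{2-2/N}$, so the constant that actually comes out is $N^2\omega_N^{2/N}$; worth checking rather than inheriting.
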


\noindent \textbf{Proof} Let $t>0$ and $h>0$, and let us choose the
following test function in \eqref{weaksol} 
\begin{equation}
\varphi _{h}(x)=\left\{ 
\begin{array}{ll}
0 & \mbox{if $0<u<t$} \\ 
&  \\ 
h & \mbox{if $u> t+h$} \\ 
&  \\ 
u-t & \mbox{if $t<u<t+h$}.
\end{array}
\right.
\end{equation}

Then,

\begin{equation}
\begin{array}{ll}
\displaystyle\int_{U_{t}\setminus U_{t+h}}|\nabla u|^{2}\,dx&+ h\displaystyle
\int_{\partial U_{t+h}^{ext}}\beta u\,d\mathcal{H}^{N-1}(x) \\\displaystyle
&+\displaystyle\int_{\partial U_{t}^{ext}\setminus \partial U_{t+h}^{ext}}\beta u(u-t)\,d
\mathcal{H}^{N-1}(x)
\\&= \displaystyle\int_{U_{t}\setminus U_{t+h}}(u-t)\,dx+h\displaystyle
\int_{U_{t+h}}f(x)\,dx
\end{array}
\end{equation}
dividing by $h$ and letting $h$ go to zero, using coarea formula we have
that for a.e. $t>0$

\begin{eqnarray}
\int_{\partial U_{t}}g(x)\,d\mathcal{H}^{N-1}(x)&=&\int_{\partial
U_{t}^{int}}|\nabla u|\,d\mathcal{H}^{N-1}(x)\\&&+\int_{\partial
U_{t}^{ext}}\beta ud\mathcal{H}^{N-1}(x)=\int_{U_{t}}f(x)\,dx\notag
\end{eqnarray}

where 
\begin{equation*}
g(x)=\left\{ 
\begin{array}{ll}
|\nabla u| & \mbox{if $x \in \Ui$} \\ 
&  \\ 
\beta u & \mbox{if $x \in \Ue$}
\end{array}
\right.
\end{equation*}
for a.e. $t>0$ we have 
\begin{equation}
\begin{array}{ll}
&P_{u}^{2}(t)  \leq \left( \dint_{\partial U_{t}}g(x)d\mathcal{H}
^{N-1}(x)\right) \left( \dint_{\partial U_{t}}g(x)^{-1}d\mathcal{H}
^{N-1}(x)\right) = \\ 
&  \\ 
& \left( \dint_{\partial U_{t}}g(x)d\mathcal{H}^{N-1}(x)\right) \left(
\dint_{\partial U_{t}^{int}}|\nabla u|^{-1}d\mathcal{H}^{N-1}(x)+\dint_{
\partial U_{t}^{ext}}(\beta u)^{-1}d\mathcal{H}^{N-1}(x)\right) \leq \\ 
&  \\ 
& \dint_0^{\mu(t)}f^*(s)\,ds \left( -\mu ^{\prime }(t)+\dint_{\partial U_{t}^{ext}}(\beta
u)^{-1}\>d\mathcal{H}^{N-1}(x)\right) \quad t\in \lbrack 0,\max_{\Omega }u).
\end{array}
\label{estimate1}
\end{equation}
Then the isoperimetric inequality ($P_{u}(t)\geq N\omega _{N}^{\frac{1}{N}
}\mu (t)^{1-\frac{1}{N}}$) gives
\begin{equation*}
\gamma _{N}\mu (t)^{2-\frac{2}{N}}\leq \int_0^{\mu(t)}f^*(s)\,ds\left(-\mu ^{\prime }(t)+\int_{\partial
U_{t}^{ext}}\frac{1}{\beta (x)u(x)}\>d\mathcal{H}^{N-1}(x)\right)\quad t\in \lbrack
0,\max_{\Omega }u).
\end{equation*}
If $v$ solves Problem \eqref{problem}, all the previous inequalities hold as
equalities, hence \eqref{eq_fundamental} follows.$\hfill\square $

\begin{remark}
We observe that solutions $u$ and $v$ to Problem \eqref{problem} and Problem 
\eqref{problem_sharp}, always achieve their minima on the boundary of $
\Omega $ and $\Omega ^{\sharp }$ respectively. From now on we denote by 
\begin{equation*}
v_{m}=\min_{\Omega ^{\sharp }}v,\>u_{m}=\min_{\Omega }u.
\end{equation*}
The following inequality holds true 
\begin{equation}
u_{m}\leq v_{m}.  \label{ineq_bordo}
\end{equation}
In fact, using Schwarz inequality, the equations with the boundary conditions in \eqref{problem} and the isoperimetric inequality
and \eqref{problem_sharp},
\begin{eqnarray*}
\sqrt{u_{m}}\mathrm{Per}(\Omega ) &\leq &\int_{\partial \Omega }\>\sqrt{
\beta (x)}\text{ }\sqrt{u(x)}\frac{1}{\sqrt{\beta (x)}}d\mathcal{H}^{N-1}(x) \\
&\leq &\left( \int_{\partial \Omega }\>\frac{1}{\beta (x)}\text{ }d\mathcal{H
}^{N-1}(x)\right) ^{\frac{1}{2}}\left( \int_{\partial \Omega }u(x)\>\beta (x)
\text{ }d\mathcal{H}^{N-1}(x)\right) ^{\frac{1}{2}} \\
&=&\left( \int_{\partial \Omega ^{\sharp }}\frac{1}{\overline{\beta }}{d{
\mathcal{H}}^{N-1}}(x)\right) ^{\frac{1}{2}}\left( \int_{\partial \Omega
}u(x)\>\beta (x)\text{ }d\mathcal{H}^{N-1}(x)\right) ^{\frac{1}{2}} \\
&=&\left( \int_{\partial \Omega ^{\sharp }}\frac{1}{\overline{\beta }}{d{
\mathcal{H}}^{N-1}}(x)\right) ^{\frac{1}{2}}\left( \int_{\partial \Omega
^{\sharp }}v(x)\>\overline{\beta }\text{ }d\mathcal{H}^{N-1}(x)\right) ^{\frac{
1}{2}} \\
&=&\sqrt{v_{m}}\mathrm{Per}(\Omega ^{\sharp })\leq \sqrt{v_{m}}\mathrm{Per}
(\Omega ).
\end{eqnarray*}
\end{remark}

\bigskip

An  consequence of \eqref{ineq_bordo}, is that 
\begin{equation}
\mu (t)\leq \phi (t)=|\Omega |\qquad \mbox{for all $0\le t \le v_m$.}
\label{ineq_iniziale}
\end{equation}%
With strict inequality for some $0\leq t\leq v_{m}$ unless $\Omega $ is a
ball.

A fundamental lemma which allows us to estimate the boundary integral on the
right hand side on \eqref{estimate1} is the following.

\begin{lemma}
\label{lem_boundary} For all $t\geq v_{m}$ we have 
\begin{equation}
\int_{0}^{t}\left( \int_{\partial V_{\tau }\cap \partial \Omega ^{\sharp }}
\frac{1}{\overline{\beta }v(x)}\>d\mathcal{H}^{N-1}(x)\right) \,d\tau
=\int_{\partial \Omega ^{\sharp }}\frac{1}{\overline{\beta }}{d{\mathcal{H}}
^{N-1}(x)},  \label{eq_boundary}
\end{equation}
while 
\begin{equation}
\int_{0}^{t}\left( \int_{\partial U_{\tau }^{ext}}\frac{1}{\beta (x)u(x)}\>d
\mathcal{H}^{N-1}(x)\right) \,d\tau \leq \int_{\partial \Omega }\frac{1}{
\beta (x)}\>d\mathcal{H}^{N-1}(x).  \label{ineq_boundary}
\end{equation}
\end{lemma}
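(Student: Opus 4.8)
The plan is to reduce each of the two formulas to a pointwise statement on $\partial\Omega$ (respectively on $\partial\Omega^{\sharp}$) about the set of levels $\tau$ for which a fixed boundary point lies on $\partial U_{\tau}$ (respectively on $\partial V_{\tau}$), and then to integrate in $\tau$. Two reductions come first. We may assume $f\not\equiv0$, since otherwise $u\equiv v\equiv0$ and both identities are trivial; then, because $\beta\le M<\infty$, the maximum principle together with the Robin condition gives $u>0$ on $\overline{\Omega}$ and $v\ge v_{m}>0$ on $\overline{\Omega^{\sharp}}$, so all the integrands below make sense. Next, since $v$ is radial and decreasing in $|x|$ it is constant, equal to $v_{m}$, on $\partial\Omega^{\sharp}$; consequently $V_{\tau}=\Omega^{\sharp}$ when $\tau<v_{m}$, so that $\partial V_{\tau}\cap\partial\Omega^{\sharp}=\partial\Omega^{\sharp}$, while $V_{\tau}$ is a ball strictly inside $\Omega^{\sharp}$ when $\tau>v_{m}$, so that $\partial V_{\tau}\cap\partial\Omega^{\sharp}=\emptyset$.

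With this, \eqref{eq_boundary} is immediate: for every $t\ge v_{m}$, using $v\equiv v_{m}$ on $\partial\Omega^{\sharp}$ and $\mathcal{H}^{N-1}(\partial\Omega^{\sharp})=\mathrm{Per}(\Omega^{\sharp})$,
\begin{equation*}
\int_{0}^{t}\left(\int_{\partial V_{\tau}\cap\partial\Omega^{\sharp}}\frac{1}{\overline{\beta}\,v(x)}\,d\mathcal{H}^{N-1}(x)\right)d\tau=\int_{0}^{v_{m}}\frac{\mathrm{Per}(\Omega^{\sharp})}{\overline{\beta}\,v_{m}}\,d\tau=\frac{\mathrm{Per}(\Omega^{\sharp})}{\overline{\beta}}=\int_{\partial\Omega^{\sharp}}\frac{1}{\overline{\beta}}\,d\mathcal{H}^{N-1}(x).
\end{equation*}
The hypothesis $t\ge v_{m}$ is exactly what makes this an equality: for $t<v_{m}$ the left-hand side is instead the strictly smaller number $(t/v_{m})\,\mathrm{Per}(\Omega^{\sharp})/\overline{\beta}$.

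For \eqref{ineq_boundary} I would begin with the elementary inclusion: if $x\in\partial\Omega$ and $\tau>u(x)$, then by continuity of $u$ up to $\partial\Omega$ a whole neighbourhood of $x$ avoids $U_{\tau}$, hence $x\notin\partial U_{\tau}$; thus $\{\tau>0:x\in\partial U_{\tau}^{ext}\}\subseteq(0,u(x)]$, and in particular $|\{\tau\in(0,t):x\in\partial U_{\tau}^{ext}\}|\le u(x)$ for every $t>0$. (One actually has equality with $\min\{t,u(x)\}$ here, but only the one-sided bound is needed.) Applying Tonelli's theorem to exchange the $\tau$-integral and the surface integral,
\begin{equation*}
\int_{0}^{t}\left(\int_{\partial U_{\tau}^{ext}}\frac{1}{\beta(x)\,u(x)}\,d\mathcal{H}^{N-1}(x)\right)d\tau=\int_{\partial\Omega}\frac{|\{\tau\in(0,t):x\in\partial U_{\tau}^{ext}\}|}{\beta(x)\,u(x)}\,d\mathcal{H}^{N-1}(x)\le\int_{\partial\Omega}\frac{1}{\beta(x)}\,d\mathcal{H}^{N-1}(x),
\end{equation*}
which is \eqref{ineq_boundary}. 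The gap between this inequality and the equality \eqref{eq_boundary} is localized on the part of $\partial\Omega$ where $u>t$; no such set arises for $v$, whose external level set is a full sphere on which $v=v_{m}\le t$, and this is the structural reason for the asymmetry.

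I expect the only real obstacle to be the measure-theoretic bookkeeping behind the Tonelli step: the joint measurability of $(x,\tau)\mapsto\mathbf{1}_{\partial U_{\tau}^{ext}}(x)$ on $\partial\Omega\times(0,t)$, and the passage from $\partial U_{\tau}^{ext}$ to the superlevel set $\{x\in\partial\Omega:u(x)>\tau\}$. Both are handled by the regularity of $u$ up to the boundary and by the coarea formula applied to the trace of $u$ on $\partial\Omega$, which gives that $\{x\in\partial\Omega:u(x)=\tau\}$ is $\mathcal{H}^{N-1}$-negligible for a.e.\ $\tau$; hence $\partial U_{\tau}^{ext}$ and $\{x\in\partial\Omega:u(x)>\tau\}$ coincide up to an $\mathcal{H}^{N-1}$-null set for a.e.\ $\tau$, and the above computation is justified. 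Everything else is routine.
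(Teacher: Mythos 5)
Your proof is correct and takes essentially the same route as the paper: both rest on a Fubini/Tonelli exchange together with the observation that for a fixed boundary point $x$ the set of levels $\tau$ with $x\in\partial U_{\tau}^{ext}$ is contained in $(0,u(x)]$. The only cosmetic differences are that you handle the radial case by direct computation (using $v\equiv v_m$ on $\partial\Omega^{\sharp}$) instead of repeating the Fubini argument, and you bound $\int_0^t$ directly rather than first computing $\int_0^\infty$ and then dropping the tail; these do not change the substance.
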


\begin{proof}
By Fubini's theorem and using \eqref{problem} we have
\begin{eqnarray*}
\int_{0}^{\infty }\left( \int_{\partial U_{\tau }^{ext}}\frac{1}{\beta
(x)u(x)}\>d\mathcal{H}^{N-1}(x)\right) \,d\tau  &=&\int_{\partial \Omega
}\left( \int_{0}^{u(x)}\frac{1}{\beta (x)u(x)}\>d\tau \,\right) d\mathcal{H}
^{N-1}(x) \\
&=&\int_{\partial \Omega }\frac{1}{\beta (x)}\>d\mathcal{H}^{N-1}(x)
\end{eqnarray*}
Analogously, 
\begin{equation*}
\int_{0}^{\infty }\int_{\partial V_{\tau }\cap \partial \Omega ^{\sharp }}
\frac{1}{\overline{\beta }v(x)}\>d\mathcal{H}^{N-1}(x)\,d\tau
=\int_{\partial \Omega ^{\sharp }}\frac{1}{\overline{\beta }}d\mathcal{H}
^{N-1}(x).
\end{equation*}
Therefore, one trivial inequality for $t\geq 0$ is 
\begin{equation*}
\int_{0}^{t}\int_{\partial U_{\tau }^{ext}}\frac{1}{\beta(x)u(x)}\>d\mathcal{H}
^{N-1}(x)\,d\tau \leq \int_{0}^{\infty }\int_{\partial U_{\tau }^{ext}}\frac{
1}{\beta(x)u(x)}\>d\mathcal{H}^{N-1}(x)\,d\tau ,
\end{equation*}
while we observe that for $t\geq v_{m}=\min_{\partial \Omega ^{\sharp }}v$
then $\partial V_{t}\cap \partial \Omega ^{\sharp }=\emptyset $ 
\begin{equation*}
\int_{0}^{t}\int_{\partial V_{\tau }\cap \partial \Omega ^{\sharp }}\frac{1}{\overline{\beta}
v(x)}\>d\mathcal{H}^{N-1}(x)\,d\tau =\int_{0}^{\infty }\int_{\partial
V_{\tau }\cap \partial \Omega ^{\sharp }}\frac{1}{ \overline{\beta}v(x)}\>d\mathcal{H}
^{N-1}(x)\,d\tau .
\end{equation*}
\end{proof}

\begin{remark}
By the choice of $\overline{\beta }$ the above Lemma immediately implies, for $t \ge v_m$, 
\begin{equation*}
\int_{0}^{t}\left( \int_{\partial U_{\tau }^{ext}}\frac{1}{\beta (x)u(x)}\>d
\mathcal{H}^{N-1}(x)\right) \,d\tau \leq \int_{0}^{t}\left( \int_{\partial
V_{\tau }\cap \partial \Omega ^{\sharp }}\frac{1}{\overline{\beta }v(x)}\>d
\mathcal{H}^{N-1}(x)\right) \,d\tau .
\end{equation*}
\end{remark}

\begin{proof}[Proof of  Theorem \eqref{th_main_1}]
Let us firstly consider the case $N=2$ and $f\equiv 1$. Integrating from $0$ to $\tau ,$ with $\tau
\geq v_{m},$ \ref{ineq_fundamental} and \ref{eq_fundamental} respectively we
have
\begin{equation*}
4\pi \tau +\int_0^{\tau} d \mu(t) \leq \int_{0}^{\tau}\int_{\partial U_{t}^{ext}}\frac{1}{\beta (x)}\frac{1}{u(x)}\>d\mathcal{H}
^{1}(x)dt
\end{equation*}
and then
\begin{equation*}
4\pi \tau +\mu (\tau )-\left\vert \Omega \right\vert \leq \int_{0}^{\tau
}\int_{\partial U_{t}^{ext}}\frac{1}{\beta (x)}\frac{1}{u(x)}\>d\mathcal{H}
^{1}(x)dt
\end{equation*}
while for $\phi$ we have 
\begin{equation*}
4\pi \tau +\phi (\tau )-\left\vert \Omega \right\vert =\int_{0}^{\tau
}\left( \int_{\partial V_{t}\cap \partial \Omega ^{\sharp }}\frac{1}{
\overline{\beta }v(x)}\>d\mathcal{H}^{1}(x)\right) \,dt.
\end{equation*}
Using Lemma \ref{lem_boundary} we conclude 
\begin{equation}
\mu (\tau )\leq \phi (\tau )\quad \tau \geq v_{m}.  \label{estimate7}
\end{equation}
Since \eqref{ineq_bordo} is in force, inequality \eqref{estimate7} follows
for $t\geq 0$ and the claim is proved.
Now we consider the general case $N\geq 2$ and $f$ satisfying \eqref{f_cond}. Integrating equation \eqref{ineq_fundamental}
from $0$ to some $\tau \geq v_{m}$, upon dividing by $\mu(t)^{1-\frac2N}$, we obtain 
\begin{eqnarray*}
\gamma _{N}\int_{0}^{\tau }\mu (t)dt &\leq &\int_{0}^{\tau }\left(\mu (t)^{\frac{2
}{N}-1} \left(\int_0^{\mu(t)}f^*(s)\,ds\right)\left( -\mu ^{\prime }(t)\right) \right)dt\\
&&+\int_{0}^{\tau }\mu (t)^{\frac{2}{N}-1
}\int_0^{\mu(t)}f^*(s)\,ds\left(\int_{\partial U_{t}^{ext}}\frac{1}{\beta (x)}\frac{1}{u(x)}\>d\mathcal{H}
^{N-1}(x)\right)dt  \label{ineq_fund1} \\
&\leq &\int_{0}^{\tau }\left(\mu (t)^{\frac{2}{N}-1}\int_0^{\mu(t)}f^*(s)\,ds\right)\left( -d\mu(t)\right)\\
&&+\left\vert \Omega \right\vert ^{\frac{2}{N}-1}\int_0^{|\Omega|}f^*(s)\,ds\int_{0}^{\tau
}\left(\int_{\partial U_{t}^{ext}}\frac{1}{\beta (x)}\frac{1}{u(x)}\>d\mathcal{H}
^{N-1}(x)\right)dt  \notag
\end{eqnarray*}
While for $\phi $ from identity (\ref{eq_fundamental}) we get
\begin{eqnarray*}
\gamma _{N}\int_{0}^{\tau }\phi (t)dt&=&\int_{0}^{\tau }\left(\phi (t)^{\frac{2}{N}-1
}\int_0^{\phi(t)}f^*(s)\,ds\right)\left( -d\phi (t)\right) \\
&&+\left\vert \Omega \right\vert ^{\frac{2}{N}-1}\int_0^{|\Omega|}f^*(s)\,ds \int_{0}^{\tau }\left(\int_{\partial V_{t}\cap \partial \Omega ^{\sharp }}
\frac{1}{\overline{\beta }}\frac{1}{v(x)}\>d\mathcal{H}^{N-1}(x) \right)dt
\end{eqnarray*}
Then we use Lemma \ref{lem_boundary} to deduce 
\begin{eqnarray*}
\gamma _{N}\int_{0}^{\tau }\mu (t)\text{ }dt-\gamma _{N}\int_{0}^{\tau }\phi
(t)dt &\leq &-\int_{0}^{\tau }\mu (t)^{\frac{2}{N}-1} \left(\int_0^{\mu(t)}f^*(s)\,ds\right) \,d\mu(t)\\
&&+\int_{0}^{\tau }\phi (t)^{\frac{2}{N}-1} \left(\int_0^{\phi(t)}f^*(s)\,ds \right) \, d\phi(t) \\
&=&-F(\mu(t))+F(\phi(t)),
\end{eqnarray*}
where $F$ is the monotone increasing function defined by
$$F(s)=\int_{0}^{s }\sigma^{\frac{2}{N}-1} \int_0^{\sigma}f^*(r)\,dr \,d\sigma.$$
Setting
\begin{equation*}
U(\tau )=\int_{0}^{\tau }\mu (t)dt\text{ \ and \ }V(\tau )=\int_{0}^{\tau
}\phi (t)dt
\end{equation*}
we get
\begin{equation}
\frac{N+2}{N}\gamma _{N}\left( U(\tau )-V(\tau )\right) \leq -F\left(
U^{\prime }(\tau )\right)+F\left( V^{\prime }(\tau )\right).  \label{U-V}
\end{equation}

The last inequality easily implies that 
\begin{equation*}
U(\tau )\leq V(\tau ).
\end{equation*}
Indeed by contradiction suppose that $\exists \tau _{0}>0:$
\begin{equation*}
U(\tau _{0})-V(\tau _{0})>0.
\end{equation*}
There exists $\widehat{\tau}< \tau _{0}$ such that
\begin{equation}
U(\tau )-V(\tau )>0\text{ in }\left( \widehat{\tau },\tau _{0}\right) 
\label{absurd_UV}
\end{equation}
and 
\begin{equation}
U(\widehat{\tau })-V(\widehat{\tau })=0.  \label{tau_bar}
\end{equation}
Using (\ref{U-V}) and the monotonicity of $F$ we would have 
\begin{equation*}
U^{\prime }(\tau )-V^{\prime }(\tau )<0\text{ \ in \ }\left( \widehat{\tau }
,\tau _{0}\right),
\end{equation*}
which is a contradiction.
\end{proof}

\begin{remark}\label{rem_count}
The following example shows that Theorem \ref{th_main_1} can not
hod true just assuming that $f(x)\in L^{2}(\Omega )$. An additional condition,
like \eqref{f_cond}, must be imposed. 
For any $0<r<1$, let us consider first the following problem with singular datum. 

\begin{equation}
\left\{ 
\begin{array}{ccc}
-\Delta u=n(n-2)\omega _{n}\delta (x) & \text{in} & B_{r,0}\cup B_{R} \\ 
&  &  \\ 
\frac{\partial u}{\partial \nu }+
r^{n-1} u=0 & \text{on} & \partial
B_{r,0} \\ 
&  &  \\ 
u=0 & \text{on} & \partial B_{R},
\end{array}
\right.   \label{Ex}
\end{equation}
where $B_{R}$ and $B_{r,0}$ are two disjoint balls, $B_{r,0}$ centered in the origin, with $r^n+R^n=1$.
The corresponding symmetrized problem associated to (\ref{Ex}) by Theorem \ref{th_main_1} is the
following

\begin{equation*}
\left\{ 
\begin{array}{ccc}
-\Delta v=n(n-2)\omega _{n}\delta (x) & \text{in} & B_{1}(0) \\ 
&  &  \\ 
\frac{\partial u}{\partial \nu }+v=0 & \text{on} & \partial
B_{1}(0).
\end{array}
\right. 
\end{equation*}
Note that
\begin{equation*}
u=\left\{ 
\begin{array}{ccc}
\dfrac{1}{\left\vert x\right\vert ^{n-2}}+\frac{n-2}{r^{2n-2}} - \frac{1}{r^{n-1}} & \text{in} & B_{r,0} \\ 
&  &  \\ 
0 & \text{in} & B_{R}
\end{array}
\right. 
\end{equation*}
and
\begin{equation*}
v=\frac{1}{\left\vert x\right\vert ^{n-2}}+n-3\text{ in }B_{1}.
\end{equation*}

From the previous consideration we deduce that Theorem 1.1 cannot hold true
\ in this example. Indeed we have
\begin{equation*}
\int_{B_{r}(0)}udx\geq C(r)\omega _{n}r^{n}=\frac{(n-2)\omega _{n}}{r^{n-2}}
-\omega _{n}r^{2}.
\end{equation*}
Therefore as $r\rightarrow 0^{+}$
the $L^{1}$-norm of $u$ diverges while the one of $v$ does not depend on $r$.
It is clear that, by approximating the $\delta$,  one can build, from problem \eqref{Ex}, counter-examples with smooth data.
\end{remark}

\section*{Acknowledgements}
This work has been partially supported by a MIUR-PRIN 2017 grant “Qualitative and
quantitative aspects of nonlinear PDE’s,  2017JPCAPN” and by GNAMPA of INdAM.


\end{document}